\documentclass[a4paper,english,oneside]{smfcustom}
\usepackage{mathrsfs,mathtools}
\usepackage[english]{babel}
\usepackage[utf8]{inputenc}
\usepackage[T1]{fontenc}
\usepackage{pxfonts,microtype,enumitem}
\usepackage{xcolor}\usepackage[all]{xy}
\usepackage{tikz}
\usetikzlibrary{patterns,arrows}
\def\C{\mathbb{C}}   
 
\def\P{\mathbf{P}}
\def\Q{\mathbf{Q}}
\def\F{\mathbf{F}}
\def\G{\mathbf{G}}

\def\bydef{\coloneqq} 
\DeclareMathOperator{\rk}{rank}
\DeclareMathOperator{\codim}{codim}
\DeclareMathOperator{\Hom}{Hom}
\DeclarePairedDelimiter{\abs}{\lvert}{\rvert} 
\DeclarePairedDelimiter{\Set}{\lbrace}{\rbrace} 
\def\tprod{\mathop{\textstyle\prod}}

\let\leq\leqslant \let\geq\geqslant
\newcommand{\margin}[1]{\vspace\baselineskip #1\vspace\baselineskip\par\noindent}
\setcounter{secnumdepth}{2}
\author[L.~Darondeau]{Lionel Darondeau}
\address{IMJ-PRG, Sorbonne Université, CNRS, Paris, France.}
\email{darondeau@imj-prg.fr}
\dedicatory{Dedicated to Professor Piotr Pragacz} 
\title {Isotropic Kempf--Laksov flag bundles}
\keywords{Push-forward, isotropic Grassmann bundle, isotropic Schubert bundle}
\subjclass{14C17, 14M15, 14N15, 05E05}
\date{}
\begin{document}
\begin{abstract}
  We introduce analogs of the Kempf--Laksov desingularizations of Schubert bundles
  in (non-necessary Lagrangian) symplectic Grassmann bundles.
  In this setting, these are (possibly singular) irreducible flag bundles that are birational to Schubert bundles, and can be described as chains of zero-loci of regular sections in projectivized bundles.
  The orthogonal analogs are also presented.
  We immediatly derive universal Gysin formulas for isotropic Schubert bundles from these very constructions.
\end{abstract}
\maketitle

\section*{Introduction}
The goal of this short paper is to derive Gysin formulas for isotropic Schubert bundles (and for the isotropic Kempf--Laksov flag bundles introduced in this work) from the intersection theory developed in Fulton's book~\cite{Fulton}, using a plain line of thought based on the very definitions.
The elementary guiding idea is to obtain a (singular) birational model of Schubert bundles, by constructing isotropic vector spaces line by line.
To desingularize Schubert bundles, it is usual to use Bott--Samelson resolutions (see~\cite{Demazure}), but here we will deal with a much simpler geometry, in the spirit of Grothendieck's construction of flag bundles.
Sharing our goal of simplification, in~\cite{Kaz2}, Kazarian has also constructed an interesting birational model of Schubert bundles using zero-loci and projective bundles of lines, in the Lagrangian case, working with the Grassmannian as a base.
It seems however that our construction is more easily adapted to non-Lagrangian case, and it is our goal to work with base \(X\) (see~\cite{DP2} for a justification for this point).
In~\cite{AF18}, Anderson and Fulton also draw inspiration from Kempf--Laksov and Kazarian in order to prove formulas for a large class of degeneracy loci in classical types.
Some of the results of this work were announced in~\cite{DPNote}.

The paper is organized as follows. We first deal with the symplectic setting.
In Sect.~\ref{se:Schub}, we quickly recall a definition of Schubert bundles in isotropic Grassmann bundles, and we fix notations. 
In Sect.~\ref{se:KL}, we define flag bundles birational to isotropic Schubert bundles.
These are analogous to the flag bundles of Kempf and Laksov in~\cite{KL} which desingularize the Schubert bundles when working with the general linear groups.
These are constructed as a chain of zero-loci in projective bundles of lines. It is noteworthy that these are not smooth in general, but always cut out by regular sections, which is sufficient regarding our goals.
In Sect.~\ref{se:Gysin}, we derive Gysin formulas for isotropic Kempf--Laksov bundles and isotropic Schubert bundles.
In Sect.~\ref{se:Orthogonal}, we indicate how to adapt the arguments in order to treat the orthogonal setting.

\section{Schubert bundles}
\label{se:Schub}
Let \((E,\omega)\to X\) be a rank \(2n\) symplectic vector bundle for the symplectic form \(\omega\colon E\otimes E\to L\) with value in a line bundle \(L\to X\), over a variety \(X\).
For \(d\in\{1,\dotsc,n\}\), let \(\G_{d}^{\omega}(E)\) be the Grassmann bundle of isotropic \(d\)-planes in the fibers of \(E\).
For a vector space \(V\in E(x)\) let denote \(V^{\omega}\) its symplectic complement.
Let
\[
  0=E_{0}\subsetneq E_{1}\subsetneq\dotsb\subsetneq E_{n}=(E_{n})^{\omega}\subsetneq\dotsb\subsetneq(E_{0})^{\omega}=E
\]
be a reference flag of isotropic subbundles and co-isotropic subbundles of \(E\), where \(\rk(E_{i})=i\). For the sake of uniformity of notation, for \(i=0,1,\dotsc,n\), denote as well \(E_{2n-i}\bydef(E_{i})^{\omega}\).

A \textsl{partition} \(\lambda\) of a non-negative integer \(n\in\mathbb{N}\) is a decomposition of \(n\) as a sum of non-negative integers. 
We denote by \(\abs{\lambda}\) the number partitioned by \(\lambda\).
Two partitions are identified if they are the same up to order of the summands, or if one can be obtained from the other by adding some zeros.
It is usual to sort the summands in decreasing order and to write partitions as weakly decreasing sequences of non-negative integers.
A non-zero summand in a partition is called a \textsl{part}.
A partition is said \textsl{strict} if all its parts are distinct.
The \textsl{Young diagram} associated to a partition \(\lambda_1\geq \lambda_2\geq\dotsb\geq \lambda_d>0\) is the finite collection of cells, arranged in left-justified rows, with row lengths \(\lambda_1,\dotsc,\lambda_d\) (from top to bottom).
Inclusion of Young diagrams defines a partial order denoted \(\subseteq\) on the set of partitions  (it coincides with the restriction of the product order on \(\mathbb{N}^{(\mathbb{N})}\) to the set of weakly decreasing sequences).

Let \(\rho\bydef(2n,\dotsc,2n-d+1)\) be the maximal strict partition in \((2n)^{d}\bydef(2n,\dotsc,2n)\).
For a partition \(\lambda\subseteq(2n-d)^{d}\), we denote by \(\mu=\lambda^{\complement}\subseteq\rho\) the complementary partition of \(\lambda\) in \(\rho\), i.e. the partition with parts
\(
  \mu_{i}
  \bydef
  \rho_{i}
  -
  \lambda_{d+1-i}
\).
It is a strict partition with \(d\) parts.
One shall consider only \textsl{admissible} partitions \(\mu\), \textit{i.e.} partitions such that \(\mu_{i}+\mu_{j}\neq 2n+1\) for \(1\leq i,j\leq d\).

When the strict partition \(\mu=\lambda^{\complement}\) is admissible, there is the \textsl{Schubert open cell} \(\mathring{\Omega}_{\lambda}(E_{\bullet})\) in \(\G_{d}^{\omega}(E)\) given over the point \(x\in X\) by the conditions
\[
  \mathring{\Omega}_{\lambda}(E_{\bullet})
  \bydef
  \Set*{
    V\in\G_{d}^{\omega}(E)(x)
    \colon
    \dim\big(V\cap E_{\mu_{i}}(x)\big)=d+1-i,
    \text{ for }i=1,\dotsc,d
  }.
\]
The \textsl{Schubert bundle} \(\varpi_{\lambda}\colon \Omega_{\lambda}\to X\) is the Zariski-closure of \(\mathring{\Omega}_{\lambda}\), given over a point \(x\in X\) by the conditions
\[
  \Omega_{\lambda}(E_{\bullet})
  \bydef
  \Set*{
    V\in\G_{d}^{\omega}(E)(x)
    \colon
    \dim\big(V\cap E_{\mu_{i}}(x)\big)\geq d+1-i,
    \text{ for }i=1,\dotsc,d
  }.
\]
Observe that the incidence conditions are trivial if \(\lambda=0\), i.e. if \(\mu=\rho\).
The Schubert bundle \(\Omega_{\lambda}\)  is a subvariety of the Grassmann bundle \(\G_{d}^{\omega}(E)\), that is in general singular (\cite{Demazure}).
In the spirit of Kempf and Laksov~\cite{KL}, and also inspired by Kazarian~\cite{Kaz2}, we will now construct flag bundles \(\vartheta_{\mu}\colon F_{\mu}(E_{\bullet})\to X\) birational to Schubert bundles \(\varpi_{\lambda}\colon\Omega_{\lambda}\to X\).

\section{Isotropic Kempf--Laksov flag bundles}
\label{se:KL}
Let \(\F^{\omega}(1,\dotsc,d)(E)\) denote the bundle of flags of nested isotropic subspaces
with respective dimensions \(1,\dotsc,d\) in the fibers of \(E\).
Consider an admissible strict partition \(\mu=(\mu_{1},\dotsc,\mu_{d})\subseteq\rho\) with \(d\) parts.
We define the \textsl{isotropic Kempf--Laksov flag bundle} \(\vartheta_{\mu}\colon F_{\mu}(E_{\bullet})\to X\) as the subvariety given over the point \(x\in X\) by
\[
  F_{\mu}(E_{\bullet})(x)
  \bydef
  \Set*{
    \Set{0}=V_{0}\subsetneq V_{1}\subsetneq\dotsb\subsetneq V_{d}\in \F^{\omega}(1,\dotsc,d)(E)(x)
    \colon
    V_{d+1-i}\subseteq E_{\mu_{i}}(x)
  }.
\]

For \(\mu=\lambda^{\complement}\), the natural forgetful map \(\F^{\omega}(1,\dotsc,d)(E)\to\G_{d}^{\omega}(E)\) restricts to a map from the Kempf--Laksov bundle \(F_{\mu}\) to \(\Omega_{\lambda}\), which is invertible on the Schubert cell \(\mathring\Omega_{\lambda}\).
However, notice that in opposition to type \(A\), isotropic Kempf--Laksov flag bundles can be singular, as we shall soon illustrate (see Example~\ref{exem:sing}).

Let us first introduce some important combinatorial quantities attached to the partition \(\mu\).
For an admissible strict partition \(\mu\subseteq\rho\), introduce the \(d\) integers:
\begin{equation}
  \label{eq:delta}
  \delta_{i}
  \bydef
  \#\Set{
    j>i
    \colon
    \mu_{i}+\mu_{j}<2n+1
  }.
\end{equation}
For a flag \(V_{\bullet}\in F_{\mu}(E_{\bullet})\), since \(V_{d+1-j}\subseteq E_{\mu_{j}}\), as soon as \(E_{\mu_{j}}\subseteq(E_{\mu_{i}})^{\omega}\), \textit{i.e.} as long as \(\mu_{i}+\mu_{j}<2n+1\), one has 
\(V_{d+1-j}\subseteq(E_{\mu_{i}})^{\omega}\).
Therefore, for \(i=1,\dotsc,d\) one has \(V_{\delta_{i}}\subseteq V_{d-i}\cap(E_{\mu_{i}})^{\omega}\). 
In our step-by-step construction of isotropic Kempf--Laksov flag bundles,
we will soon see that
\begin{equation}
  \label{eq:incidence}
  V_{d-i}\cap(E_{\mu_{i}})^{\omega}
  =
  V_{\delta_{i}}
\end{equation}
is an expected incidence condition.

Let us for now study two elementary cases of singular Kempf--Laksov flag bundles, and the role played by \eqref{eq:incidence} in these examples. There is no such example for \(n<3\), and these are the two only examples for \(n=3\).
\begin{exem}[\(n=3\)]
  \label{exem:sing}
  For \(X=\{\text{pt}\}\), consider the vector space \(E=\C^{6}\), equipped with a symplectic basis \((e_{1},e_{2},e_{3})\) of \(\C^{3}\) and dual vectors \((f_{1},f_{2},f_{3})\). Let \(E_{\bullet}\) be the standard symplectic flag corresponding to the basis \((e_{1},e_{2},e_{3},f_{3},f_{2},f_{1})\).
  Note that the forgetful map \(\F^{\omega}(1,\dots,d)\to\F^{\omega}(1,\dotsc,d-1)\) induces (dominant) maps \(F_{(\mu_{1},\dotsc,\mu_{d})}(E_{\bullet})\to F_{(\mu_{2},\dotsc,\mu_{d})}(E_{\bullet})\).
  \begin{itemize}
    \item
      Take \(\mu=(6,5,3)\).
      \begin{itemize}
        \item
          The fiber of \(F_{5,3}(E_{\bullet})\) over \([e_{1}]\in F_{3}(E_{\bullet})\) is \(\P(\langle e_{2},e_{3},f_{3},f_{2}\rangle)\), which is a \(3\)-dimensional projective space, whereas the fiber of \(F_{5,3}(E_{\bullet})\) over any other line \([a e_{1}+b e_{2}+c e_{3}]\), with \(\abs{b}^{2}+\abs{c}^{2}=1\), identifies with
          \(\P(\langle e_{1},e_{2},e_{3},f_{3},f_{2}\rangle/\langle ae_{1}+be_{2}+ce_{3},\bar{c}f_{3}+\bar{b}f_{2}\rangle)\), which is a \(2\)-dimensional projective space.
        \item
          In both cases the fiber of \(F_{6,5,3}(E_{\bullet})\) over \(F_{5,3}(E_{\bullet})\) is then a \(1\)-dimensional projective space.
      \end{itemize}

      Since \((E_{5})^{\omega}=E_{1}\) and \((E_{6})^{\omega}=E_{0}\), in this case the expected incidence conditions are \(V_{1}\cap E_{1}=\Set{0}\) and \(V_{2}\cap E_{0}=\Set{0}\) (the second condition is empty).
    \item
      Take \(\mu=(6,5,4)\).
      \begin{itemize}
        \item
          The fiber of \(F_{5,4}(E_{\bullet})\) over \([e_{1}]\in F_{4}(E_{\bullet})\) is \(\P(\langle e_{2},e_{3},f_{3},f_{2}\rangle)\), which is a \(3\)-dimensional projective space, whereas the fiber of \(F_{5,4}(E_{\bullet})\) over any other line \([a e_{1}+b e_{2}+c e_{3}+d f_{3}]\), with \(\abs{b}^{2}+\abs{c}^{2}+\abs{d}^{2}=1\), identifies with
          \(\P(\langle e_{1},e_{2},e_{3},f_{3},f_{2}\rangle/\langle ae_{1}+be_{2}+ce_{3},\bar{d}e_{3}+\bar{c}f_{3}+\bar{b}f_{2}\rangle)\), which is a \(2\)-dimensional projective space.
        \item
          In both cases the fiber of \(F_{6,5,4}(E_{\bullet})\) over \(F_{5,4}(E_{\bullet})\) is then a \(1\)-dimensional projective space.
      \end{itemize}

      In this case the expected incidence conditions are still \(V_{1}\cap E_{1}=\Set{0}\) and \(V_{2}\cap E_{0}=\Set{0}\).
  \end{itemize}
\end{exem}

From these examples, one can extrapolate the role played by~\eqref{eq:incidence} in the general case.
However, in order to reduce the combinatorial difficulty, we will deal with a strengthening of the conditions~\eqref{eq:incidence}.
For a flag \(V_{\bullet}\in F_{\mu}(E_{\bullet})(x)\) over a point \(x\in X\), denote by \(\nu(V_{\bullet})\subseteq\rho\) the strict partition made of the \(d\) integers
\begin{equation}
  \label{eq:nu}
  \Set*{
    \nu_{i}\geq 1
    \colon
    \dim\left(V_{d}\cap E_{\nu_{i}}(x)\right)
    >
    \dim\left(V_{d}\cap E_{\nu_{i}-1}(x)\right)
  }.
\end{equation}
The strictness follows from Gaussian elimination in a frame of \(V_{d}\) with respect to the reference flag \(E_{\bullet}(x)\).
This argument also implies that \(\nu(V_{\bullet})\subseteq\mu\). Lastly, one also infers from this argument that, since \(V_{d}\) is isotropic, the partition \(\nu(V_{\bullet})\) has to be admissible. Indeed, if not, after the above Gaussian elimination, two vectors of the frame of \(V_{d}\) would not be symplectically orthogonal.
We denote by
\begin{equation}
  \label{eq:F°}
  \mathring{F}_{\mu}(E_{\bullet})
  \bydef
  \Set{V_{\bullet}\in F_{\mu}(E_{\bullet})\colon \nu(V_{\bullet})=\mu},
\end{equation}
the set of flags with \(\nu(V_{\bullet})\) maximal.
Clearly, for a flag in \(\mathring{F}_{\mu}(E_{\bullet})\), the conditions~\eqref{eq:incidence} hold.

\begin{theo}
  \label{theo:KL}
  Let \(\mu\subseteq\rho\) be an admissible strict partition.
  Fix a point \(x\in X\).
  \begin{itemize}
    \item
      The isotropic Kempf--Laksov flag bundle \(F_{\mu}(E_{\bullet})(x)\) is a variety of dimension \(\abs{\mu}+\abs{\delta}-d^{2}\), that can be described as a chain of zero-loci of regular sections in projectivized bundles.
    \item
      The subvariety \(\mathring{F}_{\mu}(E_{\bullet})(x)\), is an irreducible open dense subset contained in the non-singular part of \(F_{\mu}(E_{\bullet})(x)\).
  \end{itemize}
\end{theo}
\begin{proof}
  The idea is to construct the isotropic flag \(V_{1}\subsetneq\dotsb\subsetneq V_{d}\) line-by-line (considering quotients \(V_{i}/V_{i-1}\) of successive spaces), in such way that it satisfies the incidence conditions defining \(F_{\mu}(E_{\bullet})\) at each step.

  We proceed by double induction on \(d\) and \(\abs{\mu}\). For \(d=1\), the isotropic Kempf--Laksov flag bundle \(F_{\mu_{1}}(E_{\bullet})\) is \(\P(E_{\mu_{1}})\). For \(\abs{\mu}=d+\dotsb+1\), minimal, \(\mu=(d,\dotsc,1)\). Therefore, \(F_{\mu}(E_{\bullet})\) is a point, and \(\delta=(d-1,\dotsc,1,0)\). In both cases, the result is straightforward. 

  We now describe the step \(F_{(\mu_{1},\mu_{2},\dotsc,\mu_{d})}(E_{\bullet})\to F_{(\mu_{2},\dotsc,\mu_{d})}(E_{\bullet})\).
  Let \(U_{d-1}\) be the universal subbundle of rank \(d-1\) on \(\F^{\omega}(1,\dotsc,d-1)(E)\).
  Note that in restriction to \(F_{(\mu_{2},\dotsc,\mu_{d})}(E_{\bullet})\):
  \begin{itemize}
    \item the condition \(V_{d-1}\subseteq E_{\mu_{2}}(x)\) yields: \(U_{d-1}\subseteq E_{\mu_{2}}\subseteq E_{\mu_{1}}\);
    \item the condition \(V_{d-1}\) isotropic yields:
      \(V_{d-1}\oplus \ell(x)\) isotropic \(\Leftrightarrow \ell\subset(U_{d-1})^{\omega}\) (recall that a line \(\ell\) is always isotropic).
  \end{itemize}
  It thus follows from the definition of \(F_{\mu}(E_{\bullet})\) that
  \[
    F_{(\mu_{1},\dotsc,\mu_{d})}(E_{\bullet})
    \simeq
    \Set*{
      \ell\in\P(E_{\mu_{1}}/U_{d-1})
      \colon
      \ell\subseteq(U_{d-1})^{\omega}
    }.
  \]
  In the above quotient, and in the rest of the text, we only imply the restriction of \(E_{\mu_{1}}\) and of \(U_{d-1}\) to \(F_{(\mu_{2},\dotsc,\mu_{d})}(E_{\bullet})\), allowing the expression to make sense.
  We denote by \(U_{d}/U_{d-1}\) the tautological subbundle of \(\P(E_{\mu_{1}}/U_{d-1})\), so that \(U_{d}\) coincide with the restriction to \(F_{(\mu_{1},\dotsc,\mu_{d})}(E_{\bullet})\) of the universal subbundle of rank \(d\) on \(\F^{\omega}(1,\dotsc,d)(E)\).
  Since we restrict to \(F_{(\mu_{2},\dotsc,\mu_{d})}(E_{\bullet})\), one has
  \(U_{d-1}\subseteq(U_{d-1})^{\omega}\)
  and
  \(U_{\delta_{1}}\subseteq U_{d-1}\cap(E_{\mu_{1}})^{\omega}\).
  Hence, there is a well-defined global section \(s\) of the vector bundle
  \[
    \Hom\big(U_{d}/U_{d-1},L\otimes (U_{d-1}/U_{\delta_{1}})^{\vee}\big)
    \simeq
    L
    \otimes
    (U_{d}/U_{d-1})^{\vee}
    \otimes
    (U_{d-1}/U_{\delta_{1}})^{\vee}
  \]
  defined at the point \(\ell=V_{d}/V_{d-1}\subseteq E_{\mu_{1}}/V_{d-1}\) by:
  \[
    s(\ell)
    \bydef
    \big\{t\in\ell\mapsto\omega(t,\cdot)\rvert_{V_{d-1}}\big\}.
  \]
  We denote by \(Z_{d}\) the zero-locus of \(s\) in \(\P(E_{\mu_{1}}/U_{d-1})\).
  Over a point \(V_{d-1}\supsetneq\dotsb\supsetneq V_{1}\), the lines in \(Z_{d}\) are these lines that are (symplectically) orthogonal to \(V_{d-1}\) or equivalently the lines \(\ell\) such that the vector space \( V_{d}=\ell\oplus V_{d-1} \) is isotropic. Indeed, both \(\ell\) and \(V_{d-1}\) are already isotropic.
  Therefore,
  \(
  F_{(\mu_{1},\dotsc,\mu_{d})}(E_{\bullet})
  \simeq
  Z_{d}
  \).

  Over a point \(V_{d-1}\supsetneq\dotsb\supsetneq V_{1}\) above a point \(x\in X\), the zero-locus \(Z_{d}\) consists of the common zeroes of the linear forms
  \[
    \omega(\cdot,\ell')
    \colon
    {E_{\mu_{1}}(x)/V_{d-1}}
    \to
    L(x),
  \]
  for \(\ell'\subset V_{d-1}\). Such a linear form is trivial if and only if
  \(
  \ell'
  \subseteq
  V_{d-1}\cap (E_{\mu_{1}})^{\omega}(x)
  \).
  As a consequence the codimension
  \(\codim(Z_{d},\P(E_{\mu_{1}}/U_{d-1}))\)
  of the fibers of \(Z_{d}\) is given at a point \(V_{d-1}\supsetneq\dotsb\supsetneq V_{1}\in F_{(\mu_{2},\dotsc,\mu_{d})}(E_{\bullet})(x)\) by
  \[
    \codim(Z_{d},\P(E_{\mu_{1}}/U_{d-1}))
    =
    \dim(V_{d-1})
    -
    \dim(V_{d-1}\cap (E_{\mu_{1}})^{\omega}(x)).
  \]
  Now, recall that
  \(
  U_{\delta_{1}}
  \subseteq
  U_{d-1}\cap (E_{\mu_{1}})^{\omega}
  \).
  One infers the following upper bound on the codimension of \(Z_{d}\) at a point \(V_{\bullet}\):
  \[
    \tag{\(*\)}\label{eq:codim}
    \codim(Z_{d},\P(E_{\mu_{1}}/U_{d-1}))
    \leq
    \dim(V_{d-1})
    -
    \dim(V_{\delta_{1}})
    =
    \rk\big(
      L
      \otimes
      (U_{d}/U_{d-1})^{\vee}
      \otimes
      (U_{d-1}/U_{\delta_{1}})^{\vee}
    \big).
  \]
  Note that this inequality is an equality if and only if \(V_{d-1}\cap(E_{\mu_{1}})^{\omega}(x)=V_{\delta_{1}}\). This is condition~\eqref{eq:incidence} for \(i=1\).

  The flags in \(\mathring{F}_{\mu}(E_{\bullet})\) satisfy all conditions~\eqref{eq:incidence}, for \(i\leq d\).
  One infers inductively that the dimension of \(\mathring{F}_{\mu}(E_{\bullet})\) is
  \[
    \big(\mu_{1}-d\big) -\big((d-1)-\delta_{1}\big)
    +
    \big((\mu_{2}+\dotsb+\mu_{d})+(\delta_{2}+\dotsb+\delta_{d})-(d-1)^{2}\big)
    =
    \abs{\mu}+\abs{\delta}-d^{2}.
  \]
  Moreover, it follows from~\eqref{eq:codim} that the dimensions of the irreducible components of \(F_{\mu}(E_{\bullet})\simeq Z_{d}\) are all at least equal to the dimension of \(\mathring{F}_{\mu}(E_{\bullet})\).

  Now, consider an admissible partition \(\nu\subsetneq\mu\), for which there are \(p\) elements in the chain of admissible partitions from \(\nu\) to \(\mu\), with respect to the partial order \(\subseteq\).
  On the one hand, by Lemma~\ref{lemm:fibration} below, one has:
  \[
    \dim\left(\Set*{V_{\bullet}\in F_{\mu}(E_{\bullet})\colon\nu(V_{\bullet})=\nu}\right)
    <
    p+\dim\left(\mathring{F}_{\nu}(E_{\bullet})\right)
    =
    p+\abs{\nu}+\abs{\delta(\nu)}-d^{2}.
  \]
  On the other hand, by Lemma~\ref{lemm:dimension}, one has:
  \[
    \dim\left(\Set*{V_{\bullet}\in F_{\mu}(E_{\bullet})\colon\nu(V_{\bullet})=\mu}\right)
    =
    \dim\left(\mathring{F}_{\mu}(E_{\bullet})\right)
    =
    \abs{\mu}+\abs{\delta(\mu)}-d^{2}
    =
    p+\abs{\nu}+\abs{\delta(\nu)}-d^{2}.
  \]
  Putting these facts together, one infers that
  \[
    \dim\left(\Set*{V_{\bullet}\in F_{\mu}(E_{\bullet})\colon\nu(V_{\bullet})=\nu}\right)
    <
    \dim\left(\Set*{V_{\bullet}\in F_{\mu}(E_{\bullet})\colon\nu(V_{\bullet})=\mu}\right).
  \]
  Thus
  \[
    \dim\left(\Set*{V_{\bullet}\in F_{\mu}(E_{\bullet})\colon\nu(V_{\bullet})\subsetneq\mu}\right)
    <
    \dim\left(\Set*{V_{\bullet}\in F_{\mu}(E_{\bullet})\colon\nu(V_{\bullet})=\mu}\right).
  \]
  Therefore \(\mathring{F}_{\mu}(E_{\bullet})=\Set*{V_{\bullet}\in F_{\mu}(E_{\bullet})\colon\nu(V_{\bullet})=\mu}\) is dense in \(F_{\mu}(E_{\bullet})\).

  Let us now prove by induction on \(d\) that \(\mathring{F}_{\mu}(E_{\bullet})\) is irreducible. The regularity of \(s\) will follow.
  The same argument also gives the smoothness of \(\mathring{F}_{\mu}(E_{\bullet})\).
  For \(d=1\), one has \(\mathring{F}_{\mu_{1}}(E_{\bullet})=F_{\mu_{1}}(E_\bullet)=\P(E_{\mu_{1}})\). Hence \(\mathring{F}_{\mu}(E_{\bullet})\) is irreducible (and smooth).
  The forgetful map \(\F^{\omega}(1,\dotsc,d)(E)\to\F^{\omega}(1,\dotsc,d-1)(E)\) gives a smooth fibration from \(\mathring{F}_{\mu}(E_{\bullet})\subseteq F_{\mu}(E_{\bullet})\) to \(\mathring{F}_{(\mu_{2},\dotsc,\mu_{d})}(E_{\bullet})\subseteq F_{(\mu_{2},\dotsc,\mu_{d})}(E_{\bullet})\).
  Assuming that \(\mathring{F}_{(\mu_{2},\dotsc,\mu_{d})}(E_{\bullet})\) is irreducible (and smooth), one gets that \(\mathring{F}_{\mu}(E_{\bullet})\) as well is irreducible (and smooth).
\end{proof}

It remains to prove the following combinatorial lemmas on admissible strict partitions in order to conclude.
\begin{lemm}
  \label{lemm:dimension}
  Let \(\mu\) and \(\nu\) be two admissible strict partitions in \(\rho\). If \(\nu\) is a direct predecessor of \(\mu\) for the product order among all admissible partitions, then
  \[
    \abs{\nu}+\abs{\delta(\nu)}
    =
    \abs{\mu}+\abs{\delta(\mu)}-1.
  \]
\end{lemm}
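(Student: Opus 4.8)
The plan is to regard
\[
  \phi(\mu)\bydef\abs{\mu}+\abs{\delta(\mu)}
\]
as a rank function on the poset of admissible strict partitions in $\rho$ and to prove that it drops by exactly one across each covering relation, which is precisely the assertion $\phi(\nu)=\phi(\mu)-1$. I would begin with the symmetric reformulation
\[
  \abs{\delta(\mu)}
  =
  \#\Set*{(i,j)\colon i<j,\ \mu_i+\mu_j\leq 2n},
\]
which is licit because admissibility forbids the borderline value $\mu_i+\mu_j=2n+1$. With $\abs{\delta}$ written this way the mechanism is transparent: passing to a smaller partition decreases $\abs{\mu}$, whereas $\abs{\delta}$ can only grow, the only pairs that change status being those whose sum drops from $\geq 2n+2$ to $\leq 2n$. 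The whole content of the lemma is that these two opposite effects differ by exactly one.

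Next I would pin down what a direct predecessor looks like. Writing $\mu$ as its set of parts in $\Set{1,\dotsc,2n}$, admissibility means that this set meets each pair $\Set{a,\,2n+1-a}$ at most once. A short argument—lowering a single part to the first value keeping the set admissible, and otherwise testing whether an admissible intermediate partition can be inserted—should show that a direct predecessor $\nu$ of $\mu$ has one of two shapes. Either \emph{(I)} one part $\mu_k$ is lowered to the largest value $v<\mu_k$ for which the result is still admissible, the skipped values $v<w<\mu_k$ being precisely those with $2n+1-w$ a part of $\mu$; or \emph{(II)} two parts with $\mu_k+\mu_l=2n+2$ are lowered simultaneously by one, to a pair of sum $2n$. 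The partition $\mu=(6,5,3)$ of Example~\ref{exem:sing} already exhibits a coupled predecessor (lowering the pair $(5,3)$, of sum $2n+2=8$, to $(4,2)$, i.e.\ $(6,4,2)$), which makes clear that the moves of type~(II) are forced by admissibility.

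I would then compute $\phi(\mu)-\phi(\nu)$ in each case. In case~(I) one has $\abs{\mu}-\abs{\nu}=\mu_k-v$, while the pairs that become newly counted are those pairing the lowered part with a part $\mu_m$ satisfying $2n-\mu_k<\mu_m\leq 2n-v$; these $\mu_m$ are exactly the parts $2n+1-w$ attached to the skipped values $w\in\Set{v+1,\dotsc,\mu_k-1}$, so there are $\mu_k-v-1$ of them, whence $\phi(\mu)-\phi(\nu)=(\mu_k-v)-(\mu_k-v-1)=1$. In case~(II) one has $\abs{\mu}-\abs{\nu}=2$, and the only pair whose sum crosses the threshold is the coupled pair itself (sum $2n+2\mapsto 2n$), since admissibility of $\mu$ rules out any single lowering producing a sum equal to $2n+1$; hence $\phi(\mu)-\phi(\nu)=2-1=1$. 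In both cases the decisive $-1$ has the same source: admissibility of $\mu$ guarantees that $2n+1-\mu_k$ is not a part, so the top of the lowering contributes no new pair.

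The routine part is the bookkeeping of which pairs cross the threshold $2n$. The main obstacle is the combinatorial classification of the covering relation—in particular, recognizing that admissibility creates the coupled predecessors of type~(II), and checking that no wider coupled move is ever a direct predecessor. One clean way to organize this is to encode $\mu$ by the slots $\Set{a,\,2n+1-a}$ it occupies together with a sign recording which element of the pair is chosen: in these coordinates type~(I) becomes the sliding of one symbol past a block of opposite symbols into an empty slot, and type~(II) becomes the transposition of two adjacent opposite symbols, after which the gradedness by $\phi$ becomes visible.
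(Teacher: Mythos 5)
Your proposal is correct and takes essentially the same approach as the paper: both classify the covering relations of the poset of admissible strict partitions --- single-part lowerings (your type (I)) and the coupled lowering by one of a pair of parts summing to \(2n+2\) (your type (II), the paper's move 2b) --- and then verify that each covering move decreases \(\abs{\mu}+\abs{\delta(\mu)}\) by exactly one. The differences are presentational: the paper encodes partitions by black/gray/white balls and splits your type (I) into its moves 1 and 2a (the latter being the case \(\mu_k=n+1\), where the part lands on its own mirror value \(n\)), and, like you, it asserts the classification of direct predecessors pictorially without a detailed proof.
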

\begin{proof}
  In order to be more synthetic, for an admissible strict partition \(\mu\subseteq\rho\) we represent \(\Set{1,\dotsc,2n}\) by \(2n\) balls. We colorize the ball \(i\) in black if \(i\) is a part of \(\mu\), in gray if \((2n+1-i)\) is a part of \(\mu\), and in white otherwise.
  Then there are three ways to obtain a direct predecessor \(\nu\) of \(\mu\):
  \begin{enumerate}
    \usetikzlibrary{decorations.pathreplacing}
  \item
    \[
      \begin{tikzpicture}[scale=.4,baseline=(A)]
        \def\a{1}
        \tikzset{dynkin/.style={circle,draw,minimum size=2mm}}
        \path
          (0:0)   coordinate (A) ++(0:\a) coordinate (B)
          ++(0:\a)   node[dynkin,fill=black!20] (N1) {} 
          ++(0:2*\a) node[dynkin] (N2) {}
          ++(0:\a)    coordinate (C) ++(0:2*\a) coordinate (D)
          ++(0:\a) node[dynkin] (N3) {}
          ++(0:2*\a) node[dynkin,fill=black] (N4) {}
          ++(0:\a)    coordinate (E) ++(0:\a) coordinate (F);
        \draw[dashed] (A)--(B) (C)--(D) (E)--(F);
        \draw (B)--(N1)--(N2)--(C) (D)--(N3)--(N4)--(E);
      \end{tikzpicture}
      \qquad\leadsto\qquad
      \begin{tikzpicture}[scale=.4,baseline=(A)]
        \def\a{1}
        \tikzset{dynkin/.style={circle,draw,minimum size=2mm}}
        \path
          (0:0)   coordinate (A) ++(0:\a) coordinate (B)
          ++(0:\a)   node[dynkin] (N1) {} 
          ++(0:2*\a) node[dynkin,fill=black!20] (N2) {}
          ++(0:\a)    coordinate (C) ++(0:2*\a) coordinate (D)
          ++(0:\a) node[dynkin,fill=black] (N3) {}
          ++(0:2*\a) node[dynkin] (N4) {}
          ++(0:\a)    coordinate (E) ++(0:\a) coordinate (F);
        \draw[dashed] (A)--(B) (C)--(D) (E)--(F);
        \draw (B)--(N1)--(N2)--(C) (D)--(N3)--(N4)--(E);
      \end{tikzpicture}
    \]
    \[
      \left(
        \text{or\quad}
        \begin{tikzpicture}[scale=.4,baseline=(A)]
          \def\a{1}
          \tikzset{dynkin/.style={circle,draw,minimum size=2mm}}
          \path
            (0:0)   coordinate (A) ++(0:\a) coordinate (B)
            ++(0:\a)   node[dynkin] (N1) {} 
            ++(0:2*\a) node[dynkin,fill=black] (N2) {}
            ++(0:\a)    coordinate (C) ++(0:2*\a) coordinate (D)
            ++(0:\a) node[dynkin,fill=black!20] (N3) {}
            ++(0:2*\a) node[dynkin] (N4) {}
            ++(0:\a)    coordinate (E) ++(0:\a) coordinate (F);
          \draw[dashed] (A)--(B) (C)--(D) (E)--(F);
          \draw (B)--(N1)--(N2)--(C) (D)--(N3)--(N4)--(E);
        \end{tikzpicture}
        \qquad\leadsto\qquad
        \begin{tikzpicture}[scale=.4,baseline=(A)]
          \def\a{1}
          \tikzset{dynkin/.style={circle,draw,minimum size=2mm}}
          \path
            (0:0)   coordinate (A) ++(0:\a) coordinate (B)
            ++(0:\a)   node[dynkin,fill=black] (N1) {} 
            ++(0:2*\a) node[dynkin] (N2) {}
            ++(0:\a)    coordinate (C) ++(0:2*\a) coordinate (D)
            ++(0:\a) node[dynkin] (N3) {}
            ++(0:2*\a) node[dynkin,fill=black!20] (N4) {}
            ++(0:\a)    coordinate (E) ++(0:\a) coordinate (F);
          \draw[dashed] (A)--(B) (C)--(D) (E)--(F);
          \draw (B)--(N1)--(N2)--(C) (D)--(N3)--(N4)--(E);
        \end{tikzpicture}
        \text{\phantom{or\quad}}
    \right)\]
    \[
      \textit{here}
      \quad
      \abs{\nu}=\abs{\mu}-1,
      \quad
      \abs{\delta(\nu)}=\abs{\delta(\mu)}
    \]
  \item[2a.]
    \[
      \begin{tikzpicture}[scale=.4,baseline=(A)]
        \def\a{1}
        \tikzset{dynkin/.style={circle,draw,minimum size=2mm}}
        \path
          (0:0)   coordinate (A) ++(0:\a) coordinate (B)
          ++(0:\a)   node[dynkin,fill=black!20] (N1) {}  +(-90:.75) node[scale=.75]{\(n\)}
          ++(0:2*\a) node[dynkin,fill=black] (N2) {} +(-90:.75) node[scale=.75]{\(n+1\)}
          ++(0:\a)    coordinate (C) ++(0:2*\a) coordinate (D);
        \draw[dashed] (A)--(B) (C)--(D);
        \draw (B)--(N1)--(N2)--(C);
      \end{tikzpicture}
      \qquad\leadsto\qquad
      \begin{tikzpicture}[scale=.4,baseline=(A)]
        \def\a{1}
        \tikzset{dynkin/.style={circle,draw,minimum size=2mm}}
        \path
          (0:0)   coordinate (A) ++(0:\a) coordinate (B)
          ++(0:\a)   node[dynkin,fill=black] (N1) {} +(-90:.75) node[scale=.75]{\(n\)}
          ++(0:2*\a) node[dynkin,fill=black!20] (N2) {} +(-90:.75)  node[scale=.75]{\(n+1\)}
          ++(0:\a)    coordinate (C) ++(0:2*\a) coordinate (D);
        \draw[dashed] (A)--(B) (C)--(D);
        \draw (B)--(N1)--(N2)--(C);
      \end{tikzpicture}
    \]
    \[
      \textit{here}
      \quad
      \abs{\nu}=\abs{\mu}-1,
      \quad
      \abs{\delta(\nu)}=\abs{\delta(\mu)}
    \]
  \item[2b.] For \(i=1,\dotsc,n-1\)
    \[
      \begin{tikzpicture}[scale=.4,baseline=(A)]
        \def\a{1}
        \tikzset{dynkin/.style={circle,draw,minimum size=2mm}}
        \path
          (0:0)   coordinate (A) ++(0:\a) coordinate (B)
          ++(0:\a)   node[dynkin,fill=black!20] (N1) {}  +(-90:.75) node[scale=.75]{\((n-i)\)}
          ++(0:2*\a) node[dynkin,fill=black] (N2) {} +(-90:.75)
          ++(0:\a)    coordinate (C) ++(0:2*\a) coordinate (D)
          ++(0:\a) node[dynkin,fill=black!20] (N3) {} +(-90:.75) node[scale=.75]{\((n+i)\)}
          ++(0:2*\a) node[dynkin,fill=black] (N4) {} +(-90:.75)
          ++(0:\a)    coordinate (E) ++(0:\a) coordinate (F);
        \draw[dashed] (A)--(B) (C)--(D) (E)--(F);
        \draw (B)--(N1)--(N2)--(C) (D)--(N3)--(N4)--(E);
      \end{tikzpicture}
      \qquad\leadsto\qquad
      \begin{tikzpicture}[scale=.4,baseline=(A)]
        \def\a{1}
        \tikzset{dynkin/.style={circle,draw,minimum size=2mm}}
        \path
          (0:0)   coordinate (A) ++(0:\a) coordinate (B)
          ++(0:\a)   node[dynkin,fill=black] (N1) {}  +(-90:.75) node[scale=.75]{\((n-i)\)}
          ++(0:2*\a) node[dynkin,fill=black!20] (N2) {} +(90:.75)
          ++(0:\a)    coordinate (C) ++(0:2*\a) coordinate (D)
          ++(0:\a) node[dynkin,fill=black] (N3) {} +(-90:.75) node[scale=.75]{\((n+i)\)}
          ++(0:2*\a) node[dynkin,fill=black!20] (N4) {}
          ++(0:\a)    coordinate (E) ++(0:\a) coordinate (F);
        \draw[dashed] (A)--(B) (C)--(D) (E)--(F);
        \draw (B)--(N1)--(N2)--(C) (D)--(N3)--(N4)--(E);
      \end{tikzpicture}
    \]
    \[
      \phantom{1+{}}
      \textit{here}
      \quad
      \abs{\nu}=\abs{\mu}-2,
      \quad
      \abs{\delta(\nu)}=\abs{\delta(\mu)}+1
    \]
\end{enumerate}
Each one of these transformations decreases \(\abs{\mu}+\abs{\delta}\) by \(1\).
\end{proof}

\begin{lemm}
  \label{lemm:fibration}
  (Let \(X\) be a point.)
  Let \(\nu\subseteq\mu\) be an admissible partition. The flags \(V_{\bullet}\in F_{\mu}(E_{\bullet})\) with \(\nu(V_{\bullet})=\nu\) form a smooth fibration over \(\mathring{F}_{\nu}(E_{\bullet})\).
  Moreover, if there are \(p\) elements in the chain of admissible partitions from \(\nu\) to \(\mu\) with respect to the partial order \(\subseteq\), then the relative dimension of this fibration is strictly less than \(p\).
\end{lemm}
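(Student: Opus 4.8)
The plan is to realize the stratum $S_{\nu}\bydef\Set*{V_{\bullet}\in F_{\mu}(E_{\bullet})\colon\nu(V_{\bullet})=\nu}$ as an iterated projective bundle over $\mathring{F}_{\nu}(E_{\bullet})$, which gives the first assertion directly, and then to play off the rank of this tower against Lemma~\ref{lemm:dimension}. First I would pin down the base. As noticed, $\nu(V_{\bullet})$ depends only on $V_{d}$, so on $S_{\nu}$ the top space ranges over the Schubert cell $C_{\nu}\bydef\Set*{V\in\G_{d}^{\omega}(E)\colon\dim(V\cap E_{\nu_{i}})=d+1-i,\ i=1,\dotsc,d}$. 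The assignment $W_{\bullet}\mapsto W_{d}$ identifies $\mathring{F}_{\nu}(E_{\bullet})$ with $C_{\nu}$, the inverse being $V\mapsto(V\cap E_{\nu_{i}})_{i}$ (each intersection has the forced dimension $d+1-i$, so the flag is recovered). Over this cell the integers $\dim(U_{d}\cap E_{k})$ are constant, hence the $U_{d}\cap E_{\mu_{i}}$ assemble into subbundles $\mathcal{G}_{i}\bydef U_{d}\cap E_{\mu_{i}}$ of constant rank $c_{i}\bydef\#\Set*{l\colon\nu_{l}\leq\mu_{i}}$, nested as $\mathcal{G}_{d}\subseteq\dotsb\subseteq\mathcal{G}_{1}=U_{d}$.

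Now I read off the fibre. Fixing $V_{d}$, a flag $V_{\bullet}\in F_{\mu}$ with $\nu(V_{\bullet})=\nu$ is the datum of $V_{1}\subsetneq\dotsb\subsetneq V_{d-1}\subseteq V_{d}$ subject only to $V_{d+1-i}\subseteq\mathcal{G}_{i}$, the isotropy being automatic inside the isotropic space $V_{d}$. Building this flag line by line, $V_{k}$ is obtained from $V_{k-1}$ by choosing a line in $\mathcal{G}_{d+1-k}/V_{k-1}$ (and $V_{k-1}\subseteq\mathcal{G}_{d+2-k}\subseteq\mathcal{G}_{d+1-k}$, so the quotient is defined). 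Globalising over $\mathring{F}_{\nu}$, the stratum $S_{\nu}$ is thus the successive projectivization $\P(\mathcal{G}_{d}),\ \P(\mathcal{G}_{d-1}/U_{1}),\ \dotsc,\ \P(\mathcal{G}_{2}/U_{d-2})$; being an iterated projective bundle it is a smooth, (Zariski-)locally trivial fibration over $\mathring{F}_{\nu}(E_{\bullet})$, which is the first claim. Its relative dimension is the sum of the ranks of the successive projective bundles, namely
\[
  r\bydef\dim S_{\nu}-\dim\mathring{F}_{\nu}(E_{\bullet})=\sum_{i=1}^{d}\big(c_{i}-(d+1-i)\big)=\#\Set*{(l,i)\colon l<i,\ \nu_{l}\leq\mu_{i}},
\]
the last equality because for $l\geq i$ one automatically has $\nu_{l}\leq\mu_{l}\leq\mu_{i}$.

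For the second assertion, write $p$ for the length of a saturated chain from $\nu$ to $\mu$. By Lemma~\ref{lemm:dimension} this length equals $(\abs{\mu}+\abs{\delta(\mu)})-(\abs{\nu}+\abs{\delta(\nu)})=\dim\mathring{F}_{\mu}(E_{\bullet})-\dim\mathring{F}_{\nu}(E_{\bullet})$, using the dimension formula of Theorem~\ref{theo:KL} (already established for $\nu$, which has $d$ parts). Hence $r<p$ is precisely $\dim S_{\nu}<\dim\mathring{F}_{\mu}(E_{\bullet})$. Substituting $c_{i}=\#\Set*{l\colon\nu_{l}\leq\mu_{i}}$ and $\abs{\delta(\cdot)}=\#\Set*{i<j\colon(\cdot)_{i}+(\cdot)_{j}\leq 2n}$, the two sides collapse and the desired bound becomes the purely combinatorial inequality
\[
  \#\Set*{(l,i)\colon l<i,\ \nu_{l}\leq\mu_{i}}
  <
  (\abs{\mu}-\abs{\nu})-\#\Set*{i<j\colon\nu_{i}+\nu_{j}\leq 2n<\mu_{i}+\mu_{j}}.
\]
Equivalently, setting $N\bydef\sum_{l}\#\Set*{k\colon\nu_{l}\leq k<\mu_{l},\ k\text{ not a part of }\mu}$ and letting $D$ be the number of pairs whose sum crosses $2n+1$, one must show $N>D$; note $N=(\abs{\mu}-\abs{\nu})-r$ counts exactly the integers in the gaps $[\nu_{l},\mu_{l})$ that are not recycled as a part of $\mu$.

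I expect this inequality $N\geq D+1$ to be the main obstacle. The easy half is $N\geq 1$: if $l^{\ast}$ is the largest index with $\nu_{l^{\ast}}<\mu_{l^{\ast}}$, then every integer of $[\nu_{l^{\ast}},\mu_{l^{\ast}})$ lies in $(\mu_{l^{\ast}+1},\mu_{l^{\ast}})$ (with the convention $\mu_{d+1}=0$), hence is not a part of $\mu$, so $N\geq\mu_{l^{\ast}}-\nu_{l^{\ast}}\geq 1$. The hard half is accounting for $D$. I would attach to each crossing pair $(i,j)$ the complement $2n+1-\mu_{i}$, which by admissibility of $\mu$ is never a part of $\mu$, then check that it falls in one of the gap intervals and that the assignment is injective while leaving the bottom gap at $l^{\ast}$ unused; verifying the interval membership and injectivity of these complements is the delicate combinatorial point. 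An alternative, perhaps cleaner, route is to induct along a saturated chain: using the monotonicity $r_{\nu}\geq r_{\nu'}$ for $\nu\subseteq\nu'$, it suffices to prove that a direct predecessor of $\mu$ has $r=0$ and that passing to a direct predecessor raises $r$ by at most $1$; both reduce to reading the effect of each of the three covering moves of Lemma~\ref{lemm:dimension} on the count $\#\Set*{(l,i)\colon l<i,\ \nu_{l}\leq\mu_{i}}$, after which $p-r\geq 1$ propagates down the chain.
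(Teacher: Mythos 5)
Your first half is correct and is essentially the paper's own construction: you identify \(\mathring{F}_{\nu}(E_{\bullet})\) with the locus of admissible top spaces \(V_{d}\) (the paper phrases this via the retraction \(V_{\bullet}\mapsto(V_{d}\cap E_{\nu_{i}})_{i}\)), you exhibit the fibre as an iterated projective bundle, and your count
\(r=\#\Set*{(l,i)\colon l<i,\ \nu_{l}\leq\mu_{i}}\)
agrees with the paper's \(\sum_{j}(k_{j}-j)\) (your \(c_{i}\) is the paper's \(k_{d+1-i}\)). Your reformulation of the bound through Lemma~\ref{lemm:dimension} and the dimension count of Theorem~\ref{theo:KL} is also sound and non-circular (the paper computes \(\dim\mathring{F}_{\mu}\) before invoking the present lemma), and it correctly targets the inequality that the application actually needs, namely \(r<\) (number of covering steps).

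The genuine gap is that the proposal stops exactly where the content of the lemma begins: the combinatorial inequality \(N\geq D+1\) \emph{is} the lemma — it is precisely what the paper's case analysis of the three covering moves, including the compensation argument for move 2b, is designed to prove — and you leave it unproven, as you yourself flag. Moreover, both of your fallback sketches break on a small example. Take \(n=3\), \(\mu=(6,4,2)\), \(\nu=(4,2,1)\): the crossing pairs are \((1,2)\) and \((1,3)\), so your assignment \((i,j)\mapsto 2n+1-\mu_{i}\) sends both to \(1\) and is not injective; and since here \(N=3=D+1\), the inequality is tight and cannot absorb any collision. For the inductive route, the key claim that passing to a direct predecessor of \(\nu\) (with \(\mu\) fixed) raises \(r\) by at most \(1\) is false: with the same \(\mu\), the type-2b covering move from \(\nu'=(5,3,1)\) down to \(\nu=(4,2,1)\) makes \(r\) jump from \(r(\mu,\nu')=0\) to \(r(\mu,\nu)=2\). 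Note also that this is not the paper's induction: the paper fixes the bottom partition \(\nu\) and moves the \emph{top} partition down the chain, and there it confronts the same possible jump of \(2\) at a 2b step, which it neutralizes by showing that admissibility of \(\nu\) forces two other steps of the chain where the quantity does not move. Without an argument of that kind, neither of your routes closes, so the second assertion of the lemma remains unproved.
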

\begin{proof}
  By Gaussian elimination with respect to the reference flag \(E_{\bullet}\), the flag \(V_{\bullet}\) is generated by some vectors \(\Set{v_{1},\dotsc,v_{d}}\) with \(v_{d+1-i}\in E_{n_{i}}\setminus E_{n_{i}-1}\) for pairwise distinct integers \(n_{i}\). Since \(V_{d}=\langle v_{1},\dotsc,v_{d}\rangle\), one has \(\Set{n_{d},\dotsc,n_{1}}=\Set{\nu_{d},\dotsc,\nu_{1}}\). However the order of the terms may differ.
  This allows us to define a map 
  \[
    \Set*{V_{\bullet}\in F_{\mu}(E_{\bullet})\colon\nu(V_{\bullet})=\nu}
    \to
    \mathring{F}_{\nu}(E_{\bullet}),
  \]
  given by
  \[
    V_{\bullet}
    \mapsto
    W_{\bullet}=(V_{d}\cap E_{\nu_{d+1-i}})_{i=1,\dotsc,d}.
  \]
  The fiber of this map over a flag \(W_{\bullet}\) is then given by a tower of projectivized bundles
  \[
    \P(W_{k_{d}}/S_{d-1})
    \to
    \dotsb
    \to
    \P(W_{k_{2}}/S_{1})
    \to
    \P(W_{k_{1}})
    \to
    \Set{W_{\bullet}},
  \]
  where we denote by \(S_{i}/S_{i-1}\) the tautological subbundle at step \(i\),
  and
  where for \(j=1,\dotsc,d\), the integer \(k_{j}\) is the largest \(i\) for which \(\nu_{d+1-i}\leq\mu_{d+1-j}\).
  Note that \(k_{j}\geq j\), since \(\nu\subseteq\mu\).
  The relative dimension of the fibration is
  \[
    (k_{1}-1)+\dotsb+(k_{d}-d).
  \]

  If \(\nu=\mu\), this quantity is of course \(0\).
  As in the proof of the Lemma~\ref{lemm:dimension}, the partition \(\nu\) being fixed, let us consider the three ways to get an immediate predecessor of an admissible partition \(\mu\) (we keep the numerotation of \ref{lemm:dimension}).
  \begin{enumerate}
    \item
      The integers \(k_{j}\) will be unchanged unless the black dot on the left hand side is a part of \(\nu\), in which case one of the integers \(k_{j}\) decreases by \(1\).
    \item[2a.]
      The integers \(k_{j}\) will be unchanged unless \(n+1\) is a part of \(\nu\), in which case one of the integers \(k_{j}\) decreases by \(1\).
    \item[2b.]
      The integers \(k_{j}\) will be unchanged unless \((n+1-i)\) or \((n+1+i)\) is a part of \(\nu\), in which case one or two of the integers \(k_{j}\) decreases by \(1\).
  \end{enumerate}
  In particular, if \(\mu\) is a direct successor of \(\nu\), then the quantity
  \((k_{1}-1)+\dotsb+(k_{d}-d)\)
  does not change when passing from \(\mu\) to \(\nu\).
  Therefore, if there are no occurences of step 2b., where it decreases by \(2\), the result is proven.

  Assume now that in the chain from \(\mu\) to \(\nu\), there are occurrences of step 2b., where both \((n+1+i)\) and \((n+1-i)\) are parts of \(\nu\). 
  Notice that by admissibility of the partition \(\nu\), in that case neither \((n-i)\) nor \((n+i)\) is a part of \(\nu\). Consider the next steps in the remaining chain to \(\nu\) involving \((n-i)\) and \((n+i)\).
  Since \((n-i)+(n+i)<2n+2\) the parts are not symmetric with respect to \((n+1)\), and there should be two separate steps.

  Consider one of these steps, for \(m=n\pm i\):
  \begin{itemize}
    \item
      Either it is a step of type 1., 2a, or 2b., where the quantity \((k_1-1)+\dotsb+(k_d-d)\) does not vary;
    \item
      Or it is a step of type 2b., where \((2n+2-m)\in\nu\). 
  \end{itemize}
  In the latter case, by admissibility, the part \((m-1)\) in the output is again not a part of the partition \(\nu\).
  One can look further in the remaining chain to \(\nu\) with \(m=m-1\).
  Since there cannot be an infinite sequence of steps where one obtains parts that are not in \(\nu\),  at some point the first case will eventually occur.

  Now, one can separate \((n+i)\) and \((n-i)\) in the reasoning, because step \(2b.\) can involve only parts \(m'\) and \(m''\) such that \(m'+m''=2n+2\), whereas in our case, all parts at stake satisfy \(m'+m''\leq(n-i)+(n+i)\). In the end one gets \(\ell+3\) steps where the quantity \((k_1-1)+\dotsb+(k_d-d)\) increases by \(\ell+2\). 

    Furthermore, our proof never involved another step 2b., where both parts are parts of \(\nu\). So one can apply the reasoning to all such steps separately.

  This analysis yields that in a chain of length \(p\) from \(\mu\) to \(\nu\), the quantity
  \((k_{1}-1)+\dotsb+(k_{d}-d)\)
  decreases by strictly less than \(p\).
\end{proof}

\section{Gysin formulas}
\label{se:Gysin}
To sum up the construction in the proof of Theorem~\ref{theo:KL},
for a strict partition \(\mu\subseteq\rho\),
we get a sequence of Kempf--Laksov flag bundles
\[
  F_{(\mu_{1},\dotsc,\mu_{d})}(E_{\bullet})
  \longrightarrow
  F_{(\mu_{2},\dotsc,\mu_{d})}(E_{\bullet})
  \longrightarrow
  \dotsb\longrightarrow
  F_{(\mu_{d-1},\mu_{d})}(E_{\bullet})
  \longrightarrow
  F_{(\mu_{d})}(E_{\bullet})
  \longrightarrow
  X,
\]
induced by forgetful maps,
which is the same as the chain of zero-loci in projective bundles:
\margin{
  \begin{equation}
    \label{eq:chain}
    \begin{tikzpicture}[baseline=(current bounding box).center,]
      \node (zd) at (2,0) {$Z_{d}$};
      \node (zd-1) at (5,0) {$Z_{d-1}$};
      \node(z2) at (9,0) {$Z_{2}$};
      \node (z1) at (12,0) {$Z_{1}$};
      \node (x) at (14,0) {$X$,};
      \draw[->] (zd)--(zd-1);
      \draw[->] (z2)--(z1);
      \draw[->] (z1)--(x);
      \node(pd) at (2,-1.2) {$\P(\iota_{d-1}^{\ast}E_{\mu_{1}}/U_{d-1})$};
      \node (pd-1) at (5,-1.2) {$\P(\iota_{d-2}^{\ast}E_{\mu_{2}}/U_{d-2})$};
      \node (p2) at (9,-1.2) {$\P(\iota_{1}^{\ast}E_{\mu_{d-1}}/U_{1})$};
      \node (p1) at (12,-1.2) {$\P(E_{\mu_{d}})$};
      \draw[right hook-latex] (zd)--node[midway,left,scale=.8]{$\iota_{d}$}(pd);
      \draw[right hook-latex] (zd-1)--node[midway,left,scale=.8]{$\iota_{d-1}$}(pd-1);
      \draw[right hook-latex] (z2)--node[midway,left,scale=.8]{$\iota_{2}$}(p2);
      \path (z1)--node[midway,left,scale=.8]{$\iota_{1}$}(p1) node[midway,sloped]{$=$};
      \draw[->] (pd)--node[near start,above left,scale=.8]{$p_{d}$}(zd-1);
      \draw[->] (p2)--(z1);
      \draw[->] (p1)--(x);
      \draw[dashed,shorten >=1cm] (zd-1)--(z2);
      \draw[dashed,->,shorten <=1cm] (zd-1)--(z2);
      \draw[dashed,shorten >=1.15cm](pd-1)--(z2);
      \draw[->,dashed,shorten <=1.5cm](pd-1)--(z2);
    \end{tikzpicture}
  \end{equation}
}
where for \(i=1,\dotsc,d\), the subvariety \(Z_{i}\bydef\Set{\ell\subseteq(U_{i-1})^{\omega}}\) is
the zero-locus of a regular section of the vector bundle \(L\otimes(U_{i}/U_{i-1})^{\vee}\otimes(U_{i-1}/U_{\delta_{d+1-i}})^{\vee}\).
In the spirit of~\cite{DP1,DP2}, we shall deduce a Gysin formula
for \(\vartheta_{\mu}:F_{\mu}(E_{\bullet})\to X\)
from this description.

We fix an integer \(d\) and we denote by \(U\) the universal subbundle on \(\G_{d}^{\omega}(E)\),
as well as its pullback to \(\F^{\omega}(1,\dotsc,d)(E)\) by the natural forgetful map \(\F^{\omega}(1,\dotsc,d)(E)\to\G_{d}^{\omega}(E)\).
We still denote bu \(U\) the restrictions of these respective bundles to Schubert bundles in \(\G_{d}^{\omega}(E)\) or to Kempf--Laksov bundles in \(\F^{\omega}(1,\dotsc,d)(E)\).
For a symmetric polynomial \(f\) in \(d\) variables, we write \(f(U)\) for the specialization of \(f\) with Chern roots of \(U^{\vee}\).

For a Laurent polynomial \(P\) in \(d\) variables \(t_{1},\dotsc,t_{d}\), and a monomial \(m\), we denote by \([m](P)\) the coefficient of \(m\) in the expansion of \(P\). Clearly, for any second monomial \(m'\), one has \([mm'](Pm')=[m](P)\).

\begin{theo}[Gysin formula]
  \label{thm:gysin-KL}
  For a strict partition \(\mu\subseteq\rho\) and \(\vartheta_{\mu}\colon F_{\mu}(E_{\bullet})\to X\):
  \[
    (\vartheta_{\mu})_{\ast} f(U)
    =
    \big[\tprod_{j=1}^{d}t_{j}^{\mu_{j}-1}\big]
    \Big(
      f(t_{1},\dotsc,t_{d})
      \tprod_{1\leq i<j\leq d}(t_{i}-t_{j})
      \tprod_{\substack{1\leq i<j\leq d\\\mu_{i}+\mu_{j}>2n+1}}(c_{1}(L)+t_{i}+t_{j})
      \tprod_{1\leq j\leq d}s_{1/t_{j}}(E_{\mu_{j}})
    \Big).
  \]
\end{theo}
\begin{proof}
  We will prove this formula by induction on \(d\).
  With the notation of~\eqref{eq:chain},
  for \(i=1,\dotsc,d\), let
  \[
    \xi_{i}
    \bydef
    c_{1}\left((U_{d+1-i}/U_{d-i})^{\vee}\right)
    \in A^{\bullet}(Z_{d+1-i}).
  \]
  Then (the pullbacks of) \(\xi_{1},\dotsc,\xi_{d}\) form a set of Chern roots for \(U^{\vee}\) on \(Z_{d}\simeq F_{\mu}(E_{\bullet})\).

  We want to compute \((\vartheta_{\mu})_{\ast}f(\xi_{1},\dotsc,\xi_{d})\).
  If \(d=1\), this is the Gysin formula along projective bundles of lines (see~\cite{DP1}).
  Assume that the formula holds for \(d-1\). Since \(Z_{d-1}\simeq F_{\mu_{2},\dotsc,\mu_{d}}(E_{\bullet})\), we know the Gysin formula \(A^{\bullet}(Z_{d-1})\to A^{\bullet}(X)\), it is thus sufficient to study the Gysin map \(A^{\bullet}(Z_{d})\to A^{\bullet}(Z_{d-1})\).
  Considering \eqref{eq:chain}, we decompose this map as
  \[
    A^{\bullet}(Z_{d})
    \stackrel{(\iota_{d})_{\ast}}\longrightarrow
    A^{\bullet}\P(\iota_{d-1}^{\ast}E_{\mu_{1}}/U_{d-1})
    \stackrel{(p_{d})_{\ast}}\longrightarrow
    A^{\bullet}(Z_{d-1}).
  \]
  The Gysin formula for \((p_{d})_{\ast}\) is the formula for projective bundles of lines. It remains to study the Gysin formula for \((\iota_{d})_{\ast}\).

  Recall that the zero-locus \(Z_{d}\) is cut out by a regular section of the vector bundle
  \(L\otimes (U_{d}/U_{d-1})^{\vee}\otimes(U_{d-1}/U_{\delta_{1}})^{\vee}\).
  By \cite[Prop.~14.1]{Fulton}, the map \((\iota_{d})_{\ast}(\iota_{d})^{\ast}\) is given by the cup-product with the top Chern class
  \[
    c_{\text{top}}(L\otimes (U_{d}/U_{d-1})^{\vee}\otimes(U_{d-1}/U_{\delta_{1}})^{\vee})
    =
    \prod_{1<j<d-\delta_{1}}(c_{1}(L)+\xi_{1}+\xi_{j})
    =
    \prod_{j>1\colon\mu_{1}+\mu_{j}>2n+1}(c_{1}(L)+\xi_{1}+\xi_{j}).
  \]

  Composing the Gysin formulas for \((p_{d})_{\ast}\) (see~\cite{DP1}) and \((\iota_{d})_{\ast}\) (and using the projection formula), we get
  \[
    (p_{d})_{\ast}(\iota_{d})_{\ast}(f(\xi_{1},\xi_{2},\dotsc,\xi_{d}))
    =
    [t_{1}^{\mu_{1}-d}]
    \big(
      f(t_{1},\xi_{2},\dotsc,\xi_{d})
      \prod_{j>1\colon\mu_{1}+\mu_{j}>2n+1}\!\!
      (c_{1}(L)+t_{1}+\xi_{j})\;
      s_{1/t_{1}}(E_{\mu_{1}}/U_{d-1})
    \big).
  \]
  Now
  \[
    s_{1/t_{1}}(E_{\mu_{1}}/U_{d-1})
    =
    s_{1/t_{1}}(E_{\mu_{1}})c_{1/t_{1}}(U_{d-1})
    =
    s_{1/t_{1}}(E_{\mu_{1}})\prod_{1<j}(1-\xi_{j}/t_{1})
    =
    s_{1/t_{1}}(E_{\mu_{1}})\prod_{1<j}\frac{(t_{1}-\xi_{j})}{t_{1}}.
  \]
  Thus (multiplying the extracted monomial and the polynomial by \(t_{1}^{d-1}\)):
  \[
    (p_{d})_{\ast}(\iota_{d})_{\ast}(f(\xi_{1},\xi_{2},\dotsc,\xi_{d}))
    =
    [t_{1}^{\mu_{1}-1}]
    \Big(
      f(t_{1},\xi_{2},\dotsc,\xi_{d})
      \tprod_{1<j\leq d}
      (t_{1}-\xi_{j})
      \tprod_{\substack{1<j\leq d\\\mu_{1}+\mu_{j}>2n+1}}\!
      (c_{1}(L)+t_{1}+\xi_{j})\;
      s_{1/t_{1}}(E_{\mu_{1}})
    \Big).
  \]
  Composing \(A^{\bullet}(Z_{d-1})\to A^{\bullet}(X)\) known by induction with this expression for \(A^{\bullet}(Z_{d})\to A^{\bullet}(Z_{d-1})\), one gets the stated formula for \(A^{\bullet}(Z_{d})\to A^{\bullet}(X)\).
\end{proof}
Note that the formula and its proof also holds for general polynomials \(f(\xi_{1},\dotsc,\xi_{d})\), without symmetry.

As a corollary, one gets the following pushforward formula, that was announced in~\cite{DPNote}.
\begin{theo}
  \label{theo:gysin-Schub}
  For a partition \(\lambda\subseteq(2n-d)^{d}\), and \(\varpi_{\lambda}\colon \Omega_{\lambda}\to X\),
  let \(\mu\) be the complementary partition of \(\lambda\) in \(\rho\),
  then:
  \[
    (\varpi_{\lambda})_{\ast} f(U)
    =
    \big[\tprod_{j=1}^{d}t_{j}^{\mu_{j}-1}\big]
    \Big(
      f(t_{1},\dotsc,t_{d})
      \tprod_{1\leq i<j\leq d}(t_{i}-t_{j})
      \tprod_{\substack{1\leq i<j\leq d\\\mu_{i}+\mu_{j}>2n+1}}(c_{1}(L)+t_{i}+t_{j})
      \tprod_{1\leq j\leq d}s_{1/t_{j}}(E_{\mu_{j}})
    \Big).
  \]
\end{theo}

\section{Orthogonal case}
\label{se:Orthogonal}
The arguments are easily adapted to the orthogonal setting, replacing the projective bundles by quadric bundles of isotropic lines and modifying \textit{mutadis mutandis}.

In this case, we get a sequence of Kempf--Laksov flag bundles
\[
  F_{(\mu_{1},\dotsc,\mu_{d})}(E_{\bullet})
  \longrightarrow
  F_{(\mu_{2},\dotsc,\mu_{d})}(E_{\bullet})
  \longrightarrow
  \dotsb\longrightarrow
  F_{(\mu_{d-1},\mu_{d})}(E_{\bullet})
  \longrightarrow
  F_{(\mu_{d})}(E_{\bullet})
  \longrightarrow
  X,
\]
induced by forgetful maps,
which is the same as the chain of zero-loci in quadric bundles:
\margin{
  \begin{equation}
    \begin{tikzpicture}[baseline=(current bounding box).center,]
      \node (zd) at (2,0) {$Z_{d}$};
      \node (zd-1) at (5,0) {$Z_{d-1}$};
      \node(z2) at (9,0) {$Z_{2}$};
      \node (z1) at (12,0) {$Z_{1}$};
      \node (x) at (14,0) {$X$,};
      \draw[->] (zd)--(zd-1);
      \draw[->] (z2)--(z1);
      \draw[->] (z1)--(x);
      \node(pd) at (2,-1.2) {$\Q(\iota_{d-1}^{\ast}E_{\mu_{1}}/U_{d-1})$};
      \node (pd-1) at (5,-1.2) {$\Q(\iota_{d-2}^{\ast}E_{\mu_{2}}/U_{d-2})$};
      \node (p2) at (9,-1.2) {$\Q(\iota_{1}^{\ast}E_{\mu_{d-1}}/U_{1})$};
      \node (p1) at (12,-1.2) {$\Q(E_{\mu_{d}})$};
      \draw[right hook-latex] (zd)--node[midway,left,scale=.8]{$\iota_{d}$}(pd);
      \draw[right hook-latex] (zd-1)--node[midway,left,scale=.8]{$\iota_{d-1}$}(pd-1);
      \draw[right hook-latex] (z2)--node[midway,left,scale=.8]{$\iota_{2}$}(p2);
      \path (z1)--node[midway,left,scale=.8]{$\iota_{1}$}(p1) node[midway,sloped]{$=$};
      \draw[->] (pd)--node[near start,above left,scale=.8]{$p_{d}$}(zd-1);
      \draw[->] (p2)--(z1);
      \draw[->] (p1)--(x);
      \draw[dashed,shorten >=1cm] (zd-1)--(z2);
      \draw[dashed,->,shorten <=1cm] (zd-1)--(z2);
      \draw[dashed,shorten >=1.15cm](pd-1)--(z2);
      \draw[->,dashed,shorten <=1.5cm](pd-1)--(z2);
    \end{tikzpicture}
  \end{equation}
}
where for \(i=1,\dotsc,d\), the subvariety \(Z_{i}\bydef\Set{\ell\subseteq(U_{i-1})^{\omega}}\) is
the zero-locus of a regular section of the vector bundle \(L\otimes(U_{i}/U_{i-1})^{\vee}\otimes(U_{i-1}/U_{\delta_{d+1-i}})^{\vee}\).

For the sake of completeness we state the general Gysin formula for Kempf--Laksov bundles \(\vartheta_{\mu}\) in the orthogonal setting. Notice the change ``\(i\leq j\)'' in the part of the formula related to isotropy; the contribution \(i=j\) appears when passing from the projective bundle of lines to the quadric bundle of isotropic lines. It reflects the fact that a line is not always isotropic: recall that the quadric bundle of isotropic lines is cut out in the projective bundle of lines by a section of the line bundle \(L\otimes(U_i/U_{i-1})^\vee\otimes(U_i/U_{i-1})^\vee\).
\begin{theo}
  For a partition \(\lambda\subseteq(2n-d)^{d}\), and \(\varpi_{\lambda}\colon\Omega_{\lambda}\to X\),
  let \(\mu\) be the complementary partition of \(\lambda\) in \(\rho\),
  then:
  \[
    (\varpi_{\lambda})_{\ast} f(U)
    =
    \big[\tprod_{j=1}^{d}t_{j}^{\mu_{j}-1}\big]
    \Big(
      f(t_{1},\dotsc,t_{d})
      \tprod_{1\leq i<j\leq d}(t_{i}-t_{j})
      \tprod_{\substack{1\leq i\leq j\leq d\\\mu_{i}+\mu_{j}>2n+1}}(c_{1}(L)+t_{i}+t_{j})
      \tprod_{1\leq j\leq d}s_{1/t_{j}}(E_{\mu_{j}})
    \Big).
  \]
\end{theo}

Note that, if the rank is \(2n\) and \(d=n\), we consider both of the two isomorphic connected components of the flag bundle. Thus, if one is interested in only one of the two components, the result should be divided by \(2\).

\backmatter
\paragraph{Acknowledgments}
The author wants to thank Benoît Cadorel for his selfless and friendly support.
He kindly helped to clarify some key steps of the argumentation in Theorem~\ref{theo:KL}.
He also wants to thank Julien Grivaux for his kind ear and for interesting mathematical discussions.

Preparatary work for this paper was done during a postdoctoral position at Polish Academy of Sciences, Warsaw, under the supervision of Professor Pragacz, to whom this paper is dedicated. Piotr Pragacz is warmly thanked for his support, for introducing the author to the subject and for very useful references. He also suggested the name ``Kempf--Laksov flag bundles'' in~\cite{DP2}.

\bibliographystyle{smfalpha}
\bibliography{gysin}
\vfill
\end{document}